\documentclass[reqno]{amsart}
\usepackage{graphicx} 
\usepackage[utf8]{inputenc}
\usepackage{amsmath}

\usepackage{tikz}

\usetikzlibrary{shapes.geometric}
\usepackage{amsthm}

\usepackage{amsfonts}
\usepackage{caption}
\usepackage{amssymb}
\usepackage{verbatim}
\usepackage{mathtools}
\usepackage{subcaption}
\usepackage{changepage}
\newtheorem{theorem}{Theorem}
\newtheorem{lemma}[theorem]{Lemma}

\newtheorem{corollary}[theorem]{Corollary}
\newtheorem{definition}[theorem]{Definition}
\numberwithin{equation}{section}

\title{Examining $H$-Closed Ducci Sequences on $\mathbb{Z}_m^n$}

\author{Mark L. Lewis }
\address{Department of Mathematical Sciences\\
Kent State University\\
Kent, OH 44242}
\email{lewis@math.kent.edu}
\author{Shannon M. Tefft}
\address{Department of Mathematical Sciences\\
Kent State University\\
Kent, OH 44242}
\email{stefft@kent.edu}

\subjclass{20D60, 11B83, 11B50}
\keywords{Ducci sequence, modular arithmetic, $n$-Number Game}

\date{February 2025}

\begin{document}
\begin{abstract}
Let $D$ be an endomorphism on $\mathbb{Z}_m^n$ so that
\[D(x_1, x_2, ..., x_n)=(x_1+x_2 \; \text{mod} \; m, x_2+x_3 \; \text{mod} \; m, ..., x_n+x_1 \; \text{mod} \; m).\]
We call the sequence $\{D^{\alpha}(\mathbf{u})\}_{\alpha=0}^{\infty}$ the Ducci sequence of $\mathbf{u} \in \mathbb{Z}_m^n$, which always enters a cycle.
Now let $H$ be an endomorphism on $\mathbb{Z}_m^n$ such that 
\[H(x_1, x_2, ..., x_n)=(x_2, x_3, ..., x_n, x_1).\]
In this paper, we will talk about a few cases when $\mathbf{u}$ and $H^{\beta}(\mathbf{u})$ have the same Ducci cycle for $\beta > 0$, as well as prove a few cases of $n,m$ where this is guaranteed for every $\mathbf{u} \in \mathbb{Z}_m^n$.
\end{abstract}
\maketitle
\section{Introduction}
\indent Let $D$ be an endomorphism on $\mathbb{Z}_m^n$ such that 
\[D(x_1, x_2, ..., x_n)=(x_1+x_2 \; \text{mod} \; m, x_2+x_3 \; \text{mod} \; m, ..., x_n+x_1 \; \text{mod} \; m).\]
We call $D$ the \textbf{Ducci function} and we call the sequence $\{D^{\alpha}(\mathbf{u})\}_{\alpha=0}^{\infty}$ the \textbf{Ducci sequence of} $\mathbf{u} \in \mathbb{Z}_m^n$, with \cite{Breuer1, Ehrlich, Glaser} being a few sources that call it this.

\indent To better illustrate a Ducci sequence, we can consider the Ducci sequence of $(0,1,4) \in \mathbb{Z}_6^3$: $(0,1,4), (1,5,4),(0,3,5), (3,2,5), (5,1,2), (0,3,1), (3,4,1), (1,5,4)$. As one can see, if we continue listing terms in the Ducci sequence of $(0,1,4)$, then it will cycle through 6 tuples: $(1,5,4),(0,3,5), (3,2,5), (5,1,2), (0,3,1), (3,4,1)$. We call these tuples the Ducci cycle of $(0,1,4)$, or more formally,

\begin{definition}
The \textbf{Ducci cycle of} $\mathbf{u}$ is  
\[\{\mathbf{v} \mid \exists \alpha \in \mathbb{Z}^+ \cup \{0\}, \beta \in \mathbb{Z}^+  \ni \mathbf{v}=D^{\alpha+\beta}(\mathbf{u})=D^{\alpha}(\mathbf{u})\}.\]
The \textbf{length of} $\mathbf{u}$, $\mathbf{Len(u)}$, is the smallest $\alpha$ satisfying the equation 
\[\mathbf{v}=D^{\alpha+\beta}(\mathbf{u})=D^{\alpha}(\mathbf{u})\]
 for some $v \in \mathbb{Z}_m^n$ \and the \textbf{period of} $\mathbf{u}$, $\mathbf{Per(u)}$, is the smallest $\beta$ that satisfies the equation. 
\end{definition}
From this, we can see that $\text{Len}(0,1,4)=1$ and $\text{Per}(0,1,4)=6$. 

\indent A particularly important Ducci sequence is what we call the \textbf{basic Ducci sequence}, which is first discussed by \cite{Ehrlich} on page 302 and also by \cite{Breuer1, Dular, Glaser}. The basic Ducci sequence of $\mathbb{Z}_m^n$ is the Ducci sequence of $(0,0,...,0,1) \in \mathbb{Z}_m^n$. We denote $L_m(n)=\text{Len}(0,0,...,0,1)$ and $P_m(n)=\text{Per}(0,0,...,0,1)$. These notations are first used in Definition 5 of \cite{Breuer1} and $P_m(n)$ is similar to the notations that \cite{Ehrlich, Glaser} use. The reason why this sequence is significant is because of Lemma 1 of \cite{Breuer1}, which says that for any $\mathbf{u} \in \mathbb{Z}_m^n$, $\text{Len}(\mathbf{u}) \leq L_m(n)$ and $\text{Per}(\mathbf{u})|P_m(n)$. The notation of $P_m(n)$ is also used to represent the maximum value of the period in $\mathbb{Z}_m^n$ on page 858 of \cite{Breuer2}.

\indent Define $K(\mathbb{Z}_m^n)$ to be the set of all tuples in $\mathbb{Z}_m^n$ that belong to a Ducci cycle for some $\mathbf{u} \in \mathbb{Z}_m^n$. This subgroup is first defined by \cite{Breuer1} in Definition 4, where they also note that $K(\mathbb{Z}_m^n)$ is a subgroup of $\mathbb{Z}_m^n$. A proof of this is provided in Theorem 1 of \cite{Paper1}.
 
 \indent We now introduce another endomorphism on $\mathbb{Z}_m^n$, call it $H$, and define it so 
 \[H(x_1, x_2, ..., x_n)=(x_2, x_3, ..., x_n, x_1).\]
 This endomorphism is first defined on page 302 of \cite{Ehrlich}, as well as by \cite{Breuer1, Glaser, Paper1}. It also satisfies the condition that $D=I+H$ where $I$ is the identity endomorphism. In this paper, we are most interested in how $D$ and $H$ interact with each other. For example, in some cases, we will have that for $\mathbf{u} \in \mathbb{Z}_m^n$, and every $\mathbf{v}$ in the Ducci cycle of $\mathbf{u}$, $H^{\beta}(\mathbf{v})$ also belongs to the Ducci cycle of $\mathbf{u}$ for every $-n < \beta <n$. If a Ducci sequence exhibits this behavior, then we call the sequence $\mathbf{H}$-\textbf{closed}. If every Ducci sequence on $\mathbb{Z}_m^n$ exhibits this behavior, then we call $\mathbb{Z}_m^n$ $H$-closed. 
 
 \indent We would like to prove a few cases where we have $\mathbb{Z}_m^n$ is $H$-closed. We begin by narrowing to the case where $n=3$ and aim to prove the following theorem:
 \begin{theorem}\label{HclosedTheorem}
$\mathbb{Z}_m^3$ is H-closed if one of the following occurs:
\begin{enumerate}
    \item $m$ is a power of $2$. Specifically, if $\mathbf{u} \in K(\mathbb{Z}_m^3)$, then $D^2(\mathbf{u})=H(\mathbf{u})$ for every $\mathbb{Z}_m^3$.
    \item $m$ is prime and $m \equiv 5 \; \text{mod} \; 6$. Specifically, $D^{m-1}(\mathbf{u})=H^2(\mathbf{u})$ for every $\mathbf{u} \in \mathbb{Z}_m^3$.
    \item $m=2^lp$ for some $l \in \mathbb{Z}^+$ and $p \equiv 5 \; \text{mod} \; 6$ prime. Specifically, if $\mathbf{u} \in K(\mathbb{Z}_m^3)$, then $D^{p-1}(\mathbf{u})=\mathbf{u}$.
\end{enumerate}
\end{theorem}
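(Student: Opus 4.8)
The plan is to turn the problem into commutative algebra. Identify $\mathbb{Z}_m^n$ with the ring $R_m:=\mathbb{Z}_m[x]/(x^n-1)$ via $(a_1,\dots,a_n)\mapsto a_1+a_2x+\cdots+a_nx^{n-1}$. Then $H$ is multiplication by $x^{n-1}=x^{-1}$ and, since $D=I+H$, $D$ is multiplication by the single element $d:=1+x^{n-1}\in R_m$. Hence $D^{\alpha}$ is multiplication by $d^{\alpha}$, the Ducci cycle of $\mathbf u$ is its eventual $d$-orbit, and $K(\mathbb{Z}_m^n)=\bigcap_{j\ge0}d^jR_m$, the largest ideal on which multiplication by $d$ is a bijection (for finite $R_m$ this is exactly the set of $d$-periodic points). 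Because $H$ and $D$ are both multiplications they commute, so for $\mathbf v\in K(\mathbb{Z}_m^n)$ the cycle of $\mathbf v$ is $H^{\beta}$-invariant as soon as $H^{\beta}$ coincides with a power of $D$ on $K$; thus each ``specifically'' clause — a power identity for $d$, on all of $R_m$ in part (2) or on $K$ in parts (1) and (3) — already implies the matching $H$-closedness assertion, and it suffices to prove the power identities.

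Now set $n=3$. We have $x^3-1=(x-1)(x^2+x+1)$, and $3=(x^2+x+1)-(x+2)(x-1)$, so $(x-1)$ and $(x^2+x+1)$ are comaximal in $\mathbb{Z}_m[x]$ whenever $3\nmid m$ — which holds in each part of the theorem. CRT then gives $R_m\cong\mathbb{Z}_m\times S_m$ with $S_m:=\mathbb{Z}_m[x]/(x^2+x+1)=\mathbb{Z}_m[\omega]$, $\omega$ a primitive cube root of unity. Under this isomorphism $x\mapsto(1,\omega)$, hence $H\mapsto(1,\omega^2)$ and, using $1+\omega+\omega^2=0$, $d=1+x^2\mapsto(2,\,1+\omega^2)=(2,-\omega)$. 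So $D$ is multiplication by $2$ on the first factor and by the root of unity $-\omega$ (of order dividing $6$) on the second.

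For part (1), $m=2^k$: here $\mathbb{Z}_{2^k}$ is local with $2$ nilpotent, and $x^2+x+1$ is irreducible mod $2$, so $S_{2^k}$ is local with residue field $\mathbb{F}_4$ and $-\omega$ a unit. Thus $\bigcap_jd^jR_m$ kills the $\mathbb{Z}_{2^k}$-factor, $K(\mathbb{Z}_{2^k}^3)\cong\{0\}\times S_{2^k}$, and on it $D$ is multiplication by $-\omega$, so $D^2$ is multiplication by $(-\omega)^2=\omega^2$, i.e.\ $D^2=H$ on $K$. For part (2), $m=p$ with $p\equiv5\bmod6$ (so $p$ is odd and $p\equiv2\bmod3$): $\mathbb{Z}_p$ is a field and $x^2+x+1$ is again irreducible, so $S_p\cong\mathbb{F}_{p^2}$, $d$ is a unit, and $K(\mathbb{Z}_p^3)=\mathbb{Z}_p^3$. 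On the $\mathbb{Z}_p$-factor $D^{p-1}=2^{p-1}=1$ by Fermat; on the $\mathbb{F}_{p^2}$-factor $D^{p-1}=(-\omega)^{p-1}=\omega^{p-1}$ (as $p-1$ is even), and $p\equiv2\bmod3$ forces $\omega^{p-1}=\omega$, which is precisely the image of $H^2$. Hence $D^{p-1}=H^2$ on $\mathbb{Z}_p^3$. In both parts $H$, $H^{-1}$, $H^{\pm2}$ are powers of $D$ on the relevant set, so every Ducci cycle is carried to itself.

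For part (3), $m=2^lp$, glue the two pictures: CRT splits $R_m\cong\mathbb{Z}_{2^l}\times S_{2^l}\times\mathbb{Z}_p\times\mathbb{F}_{p^2}$ with $D$ acting as $(2,-\omega,2,-\omega)$, and $2$ is nilpotent only on the first factor, so $K(\mathbb{Z}_m^3)\cong\{0\}\times S_{2^l}\times\mathbb{Z}_p\times\mathbb{F}_{p^2}$, on which $D$ acts by units throughout. Then $D^{p-1}$ acts as $\omega^{p-1}=\omega$ on $S_{2^l}$, as $2^{p-1}=1$ on $\mathbb{Z}_p$, and as $\omega^{p-1}=\omega$ on $\mathbb{F}_{p^2}$ — exactly the action of $H^2$ on $K(\mathbb{Z}_m^3)$ — giving the stated behavior of $D^{p-1}$ on $K(\mathbb{Z}_m^3)$; and since $H=D^{2(p-1)}$ on $K(\mathbb{Z}_m^3)$ this yields $H$-closedness. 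I expect the genuine obstacle to lie in parts (1) and (3): one must pin down the exact multiplicative order of $-\omega$ in $S_{2^l}^{\times}=\mathbb{Z}_{2^l}[\omega]^{\times}$ — equivalently, recognize that this unit group is (cyclic of order $3$)$\times$($2$-group), so $-\omega$ has order $6$ when $l\ge2$ and order $3$ when $l=1$ — and then verify that the residue $p\equiv5\bmod6$ is precisely what makes the exponent behavior on the $S_{2^l}$-factor line up with that on the $\mathbb{Z}_p$- and $\mathbb{F}_{p^2}$-factors. Once that order is in hand, everything else is routine CRT bookkeeping with powers of $\omega$ and Fermat's little theorem.
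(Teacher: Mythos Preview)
Your proof is correct and takes a genuinely different route from the paper. The paper works entirely with the explicit coefficient sequences $a_r,b_r,c_r$ (the three distinct entries of $D^r(0,0,1)$), invoking the recursion $a_{r+t}=a_ta_r+b_tc_r+c_tb_r$ and its companions, the identity $a_r+b_r+c_r=2^r$, and a case analysis of how $a_r,b_r,c_r$ differ by $\pm1$ depending on $r\bmod 6$. With these it verifies directly, for instance, that $a_{l+2}\equiv c_l$, $b_{l+2}\equiv a_l$, $c_{l+2}\equiv b_l\pmod{2^l}$, and similarly for the other parts. You instead identify $\mathbb{Z}_m^3$ with $\mathbb{Z}_m[x]/(x^3-1)$, observe that $D$ is multiplication by $1+x^2$, split via CRT as $\mathbb{Z}_m\times\mathbb{Z}_m[\omega]$ (using $3\nmid m$), and read off $D\leftrightarrow(2,-\omega)$; the three parts then reduce to trivial computations with powers of $2$ (Fermat) and powers of $\omega$ (using $p\equiv 2\bmod 3$ and $p-1$ even).

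What each buys: the paper's coefficient calculus stays close to the combinatorics of Ducci sequences and reuses lemmas already established in the companion papers; it requires no ring theory but leans on several externally proved identities. Your approach is self-contained modulo standard commutative algebra, makes the role of the hypothesis $m\equiv 5\bmod 6$ transparent (it is exactly what forces $x^2+x+1$ irreducible and $(-\omega)^{p-1}=\omega$), and explains conceptually why $K(\mathbb{Z}_m^3)$ is the factor where $2$ acts invertibly. Your closing worry about the order of $-\omega$ in $\mathbb{Z}_{2^l}[\omega]^{\times}$ is unnecessary for the theorem as stated: you only need $(-\omega)^2=\omega^2$ and $(-\omega)^{p-1}=\omega$, both of which you have already computed. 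Note also that the theorem's clause in part (3), ``$D^{p-1}(\mathbf u)=\mathbf u$'', is a typo in the paper; the paper's own proof (and yours) establishes $D^{p-1}=H^2$ on $K(\mathbb{Z}_m^3)$.
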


\indent We will then examine some cases when $n \neq 3$ and observe some patterns in when $\mathbb{Z}_m^n$ is $H$-closed. We would like to prove one particular case of this. Here, $n$ is even, and we would like to specifically prove:
\begin{theorem}\label{Hclosed_even}
Let $n$ be even and assume $m \equiv -1 \; \text{mod} \; n$ is prime. Then $\mathbb{Z}_m^n$ is $H$-closed. Namely, we have that if $\mathbf{u} \in K(\mathbb{Z}_m^n)$, then
\[D^{m-1}(\mathbf{u})=H^{-1}(\mathbf{u}).\]
\end{theorem}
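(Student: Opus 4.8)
The plan is to avoid analyzing individual tuples and instead work inside the commutative ring of endomorphisms of $\mathbb{Z}_m^n$ generated by $I$ and $H$, exploiting that $\mathbb{Z}_m$ has prime characteristic $m$. First I would record two structural facts. Since $H^n = I$ and $m \equiv -1 \pmod{n}$, we may write $m+1 = qn$ with $q \geq 1$, so that $H^m = H^{qn-1} = (H^n)^q H^{-1} = H^{-1}$, where $H^{-1}$ denotes $H^{n-1}$. Since $m$ is prime, the ``freshman's dream'' applies to the commuting operators $I$ and $H$: each binomial coefficient $\binom{m}{k}$ with $0 < k < m$ is divisible by $m$, hence acts as $0$ on the exponent-$m$ group $\mathbb{Z}_m^n$, and therefore $D^m = (I+H)^m = I + H^m$.

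Combining these gives the operator identity $D^m = I + H^{-1}$ on all of $\mathbb{Z}_m^n$. I would then factor $I + H^{-1} = H^{-1}(H + I) = H^{-1}D$, obtaining $D^m = H^{-1}D = D H^{-1}$ (the second equality since $H$ and $D$ commute). What remains is to cancel one factor of $D$, and this is precisely where the hypothesis $\mathbf{u} \in K(\mathbb{Z}_m^n)$ is used: for such $\mathbf{u}$ there is $P \geq 1$ (for instance $P = \mathrm{Per}(\mathbf{u})$) with $D^P(\mathbf{u}) = \mathbf{u}$, so $D$ permutes the finite $D$-orbit of $\mathbf{u}$; in particular $D$ is injective on the $D$-stable set $K(\mathbb{Z}_m^n)$. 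Moreover $H$ commutes with $D$, so $H$, and hence $H^{-1}$, carries $K(\mathbb{Z}_m^n)$ onto itself. Thus $D^{m-1}(\mathbf{u})$ and $H^{-1}(\mathbf{u})$ both lie in $K(\mathbb{Z}_m^n)$, and $D(D^{m-1}(\mathbf{u})) = D^m(\mathbf{u}) = D(H^{-1}(\mathbf{u}))$ by the identity above; injectivity of $D$ on $K(\mathbb{Z}_m^n)$ then yields $D^{m-1}(\mathbf{u}) = H^{-1}(\mathbf{u})$, which is the claimed equation.

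Finally I would deduce $H$-closedness. Given any $\mathbf{u} \in \mathbb{Z}_m^n$ and any $\mathbf{v}$ in its Ducci cycle, we have $\mathbf{v} \in K(\mathbb{Z}_m^n)$, so $H^{-1}(\mathbf{v}) = D^{m-1}(\mathbf{v})$ lies in the Ducci cycle of $\mathbf{v}$, which equals the Ducci cycle of $\mathbf{u}$. Since this cycle is finite and closed under $H^{-1}$, it is closed under every power of $H$, in particular under $H^{\beta}$ for $-n < \beta < n$; hence $\mathbb{Z}_m^n$ is $H$-closed.

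The part I expect to require the most care is not the algebraic identity but the bookkeeping around $K(\mathbb{Z}_m^n)$: that $D$ restricts to an automorphism of it and that $H$ preserves it, so that the cancellation $D^m = DH^{-1} \Rightarrow D^{m-1} = H^{-1}$ is legitimate there. These are essentially consequences of the basic facts about Ducci cycles already cited (Lemma 1 of \cite{Breuer1} and Theorem 1 of \cite{Paper1}), but I would state and justify them explicitly. As a cross-check I would also sketch the alternative route through the ring isomorphism $\mathbb{Z}_m^n \cong \mathbb{Z}_m[x]/(x^n-1)$, under which $D$ becomes multiplication by $1+x$: since $\gcd(m,n)=1$ this is a product of finite fields, $K(\mathbb{Z}_m^n)$ is the product of those factors on which $1+x$ is a unit (everything except the factor corresponding to the root $-1$, which occurs because $n$ is even), and the identity reduces to verifying $(1+\zeta)^{m-1} = \zeta^{-1}$ for each $n$-th root of unity $\zeta \neq -1$ in $\overline{\mathbb{Z}_m}$ — immediate from $(1+\zeta)^m = 1 + \zeta^m = 1 + \zeta^{-1} = (1+\zeta)\zeta^{-1}$.
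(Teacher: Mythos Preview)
Your proof is correct. Both your argument and the paper's rest on the same two facts---that $H^m=H^{-1}$ because $m\equiv -1\pmod n$, and that the middle binomial coefficients $\binom{m}{i}$ vanish modulo the prime $m$---but the packaging is genuinely different. The paper first cites an external result that $L_m(n)=1$ when $n$ is even and $\gcd(n,m)=1$, reduces via Lemma~\ref{Hclosed_basic} to the basic sequence, and then computes the coefficients $a_{m,s}$ of $D^m(0,\dots,0,1)$ through the binomial expansion of Lemma~\ref{2nd_ars_sum}. You bypass all of that by working directly in the endomorphism ring: the freshman's dream gives the operator identity $D^m=I+H^m=I+H^{-1}=H^{-1}D$ on the nose, and you then cancel one factor of $D$ using only the injectivity of $D$ on $K(\mathbb{Z}_m^n)$, which follows immediately from periodicity. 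Your route is shorter and more self-contained (no need for $L_m(n)=1$ or the $a_{r,s}$ machinery), and the ring-theoretic cross-check via $\mathbb{Z}_m[x]/(x^n-1)$ is a nice bonus; the paper's route has the advantage of staying within the coefficient framework it has already developed for the $n=3$ case.
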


\indent The work in this paper was done while the second author was a Ph.D. student at Kent State University under the advisement of the first author and will appear as part of the second author's dissertation. 

 
 \section{Background}
 \indent Our definition of $D$ comes from a generalization of a Ducci function, $\bar{D}: \mathbb{Z}^n \to \mathbb{Z}^n$ or $\bar{D}: (\mathbb{Z}^+ \cup \{0\})^n \to (\mathbb{Z}^+ \cup \{0\})^n$, where 
 \[\bar{D}(x_1, x_2,..., x_n)=(|x_1-x_2|, |x_2-x_3|, ..., |x_n-x_1|).\]
 This version of Ducci is examined in \cite{Ehrlich, Freedman, Glaser, Furno}. Note that if $\mathbf{u} \in \mathbb{Z}^n$, then $\bar{D}(\mathbf{u}) \in (\mathbb{Z}^+ \cup \{0\})^n$. For this reason, we will refer to both the case of Ducci on $\mathbb{Z}^n$ and $(\mathbb{Z}^+ \cup \{0\})^n$ as the Ducci case on $\mathbb{Z}^n$.  There are also some sources, for example, \cite{Brown, Chamberland, Schinzel}, that use this same definition for $\bar{D}$ but on $\mathbb{R}^n$. 
 
 \indent There are some findings for Ducci on $\mathbb{Z}^n$ that are worth noting. First, every Ducci sequence still enters a cycle, as  \cite{Burmester, Ehrlich, Glaser, Furno} all discuss. In Lemma 3 of \cite{Furno}, it is proved that by the time the Ducci sequence enters its cycle, all of the entries of the sequence belong to the set $\{0,c\}$ where $c \in \mathbb{Z}^+$. This means that the Ducci case on $\mathbb{Z}_m^n$ that we defined at the beginning of this paper is important when $m=2$ for the Ducci case on $\mathbb{Z}^n$. 
 
 
 \indent The first paper to look at Ducci defined on $\mathbb{Z}_m^n$ is \cite{Wong}, and it is also looked at in \cite{Breuer1, Breuer2, Dular}. 
 
 \indent We now examine an example of a Ducci sequence more closely. We consider a transition graph that maps out all of the Ducci sequences of $\mathbb{Z}_6^3$ and look at the connected component containing the basic Ducci sequence of $\mathbb{Z}_6^3$, provided in Figure \ref{transgraph_notHclosed}.
 
 \begin{figure}
\centering
\begin{adjustwidth}{-50 pt}{-50 pt}

\begin{tikzpicture}[node distance={30mm}, thick, main/.style = {draw, circle}]
\node[main](1){$(0,1,1)$};
\node[main](2)[above left of=1]{$(0,0,1)$};
\node[main](3)[right of=1]{$(1,2,1)$};
\node[main](4)[above right of=3]{$(3,4,4)$};
\node[main](5)[below right of=3]{$(3,3,2)$};
\node[main](6)[right of=5]{$(4,5,4)$};
\node[main](7)[below left of=5]{$(0,5,5)$};
\node[main](8)[below right of=7]{$(0,0,5)$};
\node[main](9)[left of=7]{$(5,4,5)$};
\node[main](10)[below left of=9]{$(3,2,2)$};
\node[main](11)[above left of=9]{$(3,3,4)$};
\node[main](12)[left of=11]{$(2,1,2)$};

\draw[->](2)--(1);
\draw[->](1)--(3);
\draw[->](4)--(3);
\draw[->](3)--(5);
\draw[->](6)--(5);
\draw[->](5)--(7);
\draw[->](8)--(7);
\draw[->](7)--(9);
\draw[->](10)--(9);
\draw[->](9)--(11);
\draw[->](12)--(11);
\draw[->](11)--(1);

\end{tikzpicture}
\end{adjustwidth}
\caption{Transition Graph for $\mathbb{Z}_6^3$}
\label{transgraph_notHclosed}
\end{figure}
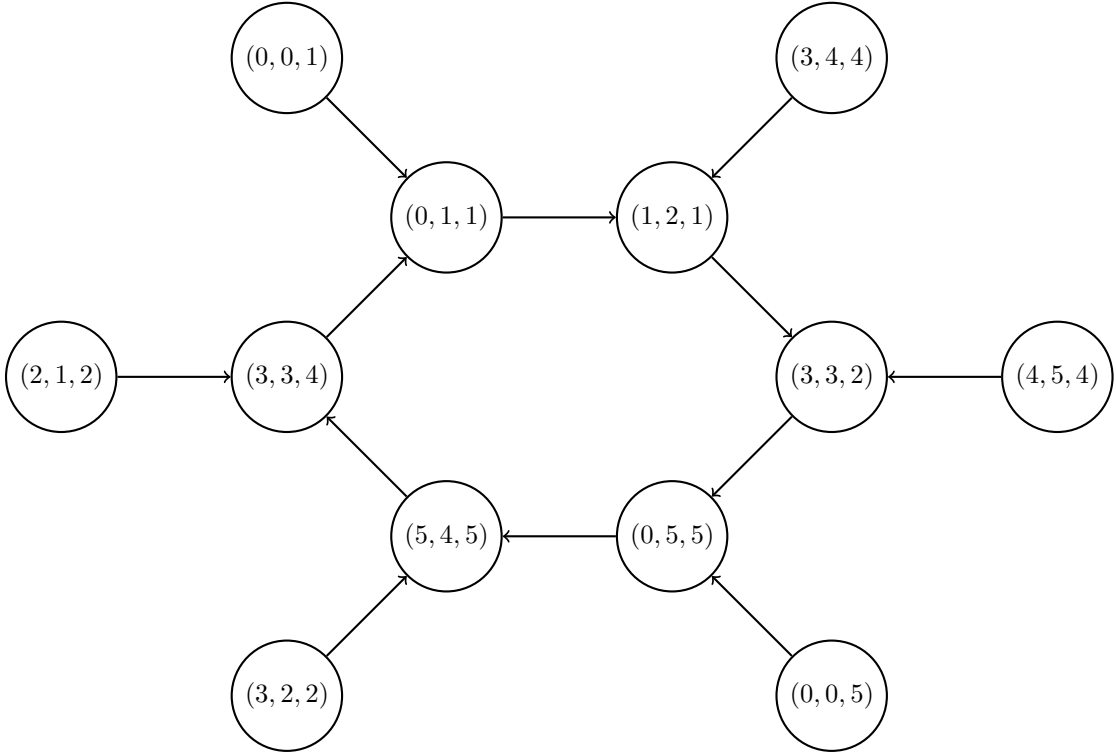
 
 \indent Notice that this sequence is not $H$-closed, which also means that $\mathbb{Z}_6^3$ is not $H$-closed. This begs the question: if $\mathbf{u}$ appears in Figure \ref{transgraph_notHclosed}, then what does the Ducci sequence of $H(\mathbf{u})$ look like? We first note that $H,D$ commute, which is proved on page 5 of \cite{Paper1} and is stated in \cite{Breuer1, Ehrlich, Glaser}. This means that for any $\mathbf{u} \in \mathbb{Z}_m^n$ and $-n < \beta <n$, the Ducci sequence of $H^{\beta}(\mathbf{u})$ is $\{H^{\beta}(D^{\alpha}(\mathbf{u}))\}_{\alpha=0}^{\infty}$. It also means that if $\mathbf{u} \in K(\mathbb{Z}_m^n)$, then $H^{\beta}(\mathbf{u}) \in K(\mathbb{Z}_m^n)$ for $-n < \beta <n$. 
 
 \indent Now if $\mathbf{u} \in \mathbb{Z}_m^n$, let $S(\mathbf{u})$ be the set of all tuples contained in the connected component of the transition graph of $\mathbb{Z}_m^n$ containing $\mathbf{u}$. Note this includes all tuples in the Ducci sequence of $\mathbf{u}$, as well as all tuples in $\mathbb{Z}_m^n$ who have the same Ducci cycle as $\mathbf{u}$. Suppose $\mathbf{u} \in \mathbb{Z}_m^n$, $-n < \beta <n$, $\beta \neq 0$, and $H^{\beta}(\mathbf{u}) \not \in S(\mathbf{u})$. Then $S(H^{\beta}(\mathbf{u}))=\{H^{\beta}(\mathbf{z}) \; | \; \mathbf{z} \in S(\mathbf{u})\}$ and the connected components of the transition graph containing $\mathbf{u}$ and $H^{\beta}(\mathbf{u})$ look the same, with the exception that if $\mathbf{z} \in S(\mathbf{u})$, then in the connected component of $H^{\beta}(\mathbf{u})$, $\mathbf{z}$ is replaced with $H^{\beta}(\mathbf{z})$. 
 
 \indent Note that by definition, the Ducci sequence of $\mathbf{u}$ is $H$-closed if and only if $S(\mathbf{u})=S(H^{\beta}(\mathbf{u}))$ for every $-n < \beta <n$.
 
 \indent This leads us to the four possibilities of what can happen with the Ducci sequences of $\mathbf{u}$ and $H^{\beta}(\mathbf{u})$ for $\mathbf{u} \in \mathbb{Z}_m^n$ and $-n < \beta <n$. The first case is the one depicted in Figure \ref{transgraph_notHclosed}: $\mathbf{u}$ satisfies the condition that $S(\mathbf{u}) \cap S(H^{\beta}(\mathbf{u}))$ is empty for every $-n < \beta < n$, $\beta \neq 0$. The second possibility is when the Ducci sequence is $H$-closed. In some cases, $\mathbb{Z}_m^n$ contains both $H$-closed sequences and sequences that are not $H$-closed. For example, in $\mathbb{Z}_6^3$, we have seen that the basic Ducci sequence is not $H$-closed. However, the Ducci sequence of $(1,2,3) \in \mathbb{Z}_6^3$ is $H$-closed because $D^2(1,2,3)=(2,3,1)$. In other cases, every Ducci sequence in $\mathbb{Z}_m^n$ is $H$-closed. An example is $\mathbb{Z}_{10}^3$; the basic Ducci sequence of which is provided in Figure \ref{transgraph_Hclosed}.
 
 \begin{figure}
 \centering
 \resizebox{.95\textwidth}{!}{
 \begin{tikzpicture}[node distance={30mm}, thick, main/.style = {draw, circle}]
 
\node[main](1){$(0,1,1)$};
\node[main](2)[right of=1] {$(1,2,1)$};
\node[main](3)[right of=2]{$(3,3,2)$};
\node[main](4)[right of=3]{$(6,5,5)$};
\node[main](5)[below of=4]{$(1,0,1)$};
\node[main](6)[below of=5]{$(1,1,2)$};
\node[main](7)[below of=6]{$(2,3,3)$};
\node[main](8)[left of=7]{$(5,6,5)$};
\node[main](9)[left of=8]{$(1,1,0)$};
\node[main](10)[left of=9]{$(2,1,1)$};
\node[main](11)[above of=10]{$(3,2,3)$};
\node[main](12)[above of=11]{$(3,2,3)$};

\node[main](13)[above left of=1]{$(0,0,1)$};
\node[main](14)[above of=2]{$(5,6,6)$};
\node[main](15)[above of=3]{$(6,7,6)$};
\node[main](16)[above right of=4]{$(8,8,7)$};
\node[main](17)[right of=5]{$(1,0,0)$};
\node[main](18)[right of=6]{$(6,5,6)$};
\node[main](19)[below right of=7]{$(6,6,7)$};
\node[main](20)[below of=8]{$(7,8,8)$};
\node[main](21)[below of=9]{$(0,1,0)$};
\node[main](22)[below left of=10]{$(6,6,5)$};
\node[main](23)[left of=11]{$(7,6,6)$};
\node[main](24)[left of=12]{$(0,0,1)$};

\draw[->](1)--(2);
\draw[->](2)--(3);
\draw[->](3)--(4);
\draw[->](4)--(5);
\draw[->](5)--(6);
\draw[->](6)--(7);
\draw[->](7)--(8);
\draw[->](8)--(9);
\draw[->](9)--(10);
\draw[->](10)--(11);
\draw[->](11)--(12);
\draw[->](12)--(1);

\draw[->](13)--(1);
\draw[->](14)--(2);
\draw[->](15)--(3);
\draw[->](16)--(4);
\draw[->](17)--(5);
\draw[->](18)--(6);
\draw[->](19)--(7);
\draw[->](20)--(8);
\draw[->](21)--(9);
\draw[->](22)--(10);
\draw[->](23)--(11);
\draw[->](24)--(12);
 
 \end{tikzpicture}
 }
 \caption{Transition Graph for $\mathbb{Z}_{10}^3$}
 \label{transgraph_Hclosed}
 \end{figure}
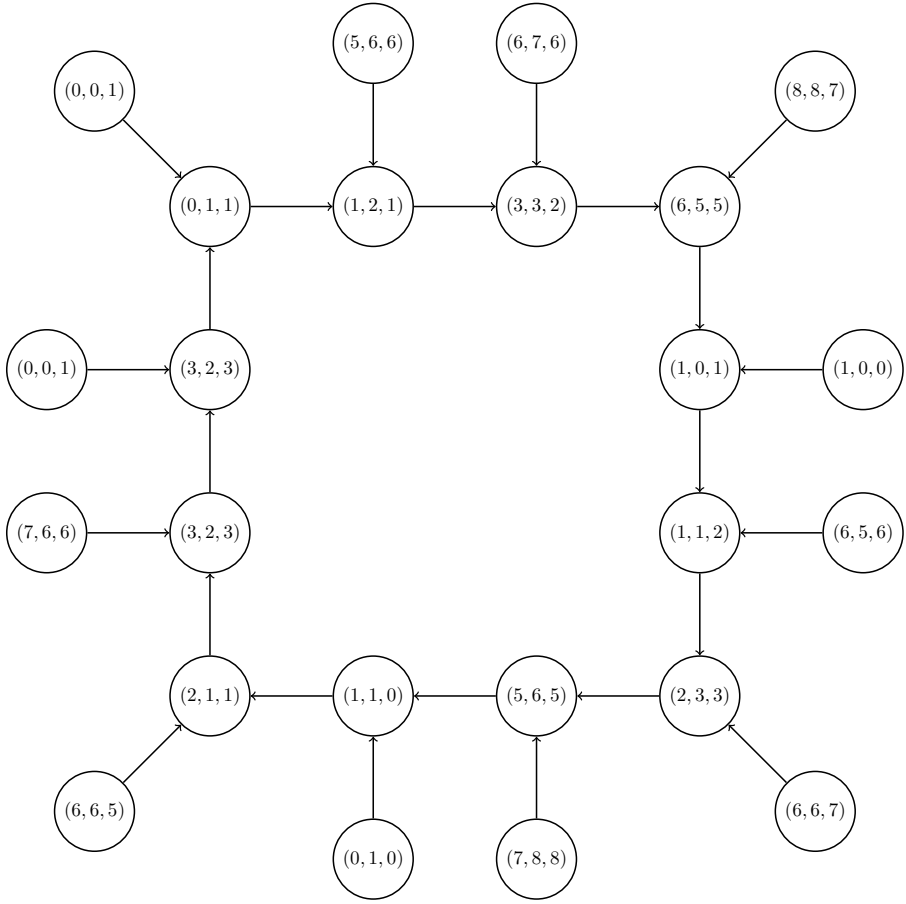
 
 \indent For the third possibility, we first need to note that for certain $n,m$, the Ducci cycle of all Ducci sequences in $\mathbb{Z}_m^n$ is $(0,0,...,0,0)$. If $n$ and $m$ are both powers of $2$, this is always the case. This is first proven in (I) on page 103 of \cite{Wong} and again by \cite{Dular, Paper1}. For this reason, these cases are $H$-closed because every tuple in $\mathbb{Z}_m^n$ has the same Ducci cycle, but do not have an $\alpha, \beta$ such that $D^{\alpha}(\mathbf{v})=H^{\beta}(\mathbf{v})$ for every $\mathbf{v} \in K(\mathbb{Z}_m^n)$ that we are intersted in because $K(\mathbb{Z}_m^n)=\{(0,0,...,0)\}$. 
 However, for every other case where the Ducci sequence of $\mathbf{u} \in \mathbb{Z}_m^n$ is $H$-closed and $\mathbf{v} \in K(\mathbb{Z}_m^n)$, there is $\alpha>0$ and $-n < \beta < n$, $\beta \neq 0$ such that $D^{\alpha}(\mathbf{v})=H^{\beta}(\mathbf{v})$.
 
 \indent The last possibility is when $\mathbf{u}$ satisfies the condition that $S(\mathbf{u}) \cap S(H(\mathbf{u}))$ is empty but there exists $-n < \beta < n$, $\beta \neq -1, 0, 1$ and $gcf(\beta, n) \neq 1$ such that $S(\mathbf{u}) =S(H^{\beta}(\mathbf{u}))$. Sometimes there are individual Ducci sequences in $\mathbb{Z}_m^n$ that satisfy this and for some $n,m$, all $\mathbf{u} \in \mathbb{Z}_m^n$ satisfy this condition. For example, if $\mathbf{v} \in K(\mathbb{Z}_3^{12})$, then $D^6(\mathbf{v})=H^{-3}(\mathbf{v})$ and for every $\mathbf{u} \in \mathbb{Z}_3^{12}$, $S(\mathbf{u})=S(H^{\beta}(\mathbf{u}))$ when $\beta \in \{0,3,6,9\}$, which we determined using MATLAB. We will talk more about how we used MATLAB to determine this in Section \ref{Hclosedsec_othercases}. Information on MATLAB can be found at \cite{MATLAB}. For this case, define the Ducci sequence of $\mathbf{u} \in \mathbb{Z}_m^n$ to be \textbf{weakly} $\mathbf{H}$-\textbf{closed} if there exists $-n < \beta <n$, $\beta \neq 0$ such that $S(\mathbf{u})=S(H^{\beta}(\mathbf{u}))$. Call $\mathbb{Z}_m^n$ weakly $H$-closed if all Ducci sequences in $\mathbb{Z}_m^n$ are weakly $H$-closed. Note by how it is defined, all $H$-closed sequences are weakly $H$-closed, but not all weakly $H$-closed sequences are $H$-closed. 
 
 \indent Now, for our proofs of Theorems \ref{HclosedTheorem} and \ref{Hclosed_even}, we will need a way to prove that $\mathbb{Z}_m^n$ is $H$-closed. 
 We note that if $L=L_m(n)$, then $D^L(\mathbf{u}) \in K(\mathbb{Z}_m^n)$ for every $\mathbf{u} \in \mathbb{Z}_m^n$ by Lemma 1 in \cite{Breuer1}. Also, because $H,D$ commute, if $\mathbf{v} \in K(\mathbb{Z}_m^n)$, $\gamma \geq 0$, and if there exists $\alpha, \beta$ such that 
 \[D^{\alpha}(\mathbf{v})=H^{\beta}(\mathbf{v}),\]
 then 
 \[D^{\gamma+\alpha}(\mathbf{v})=H^{\beta}(D^{\gamma}(\mathbf{u})).\]
 With these pieces in mind, we can prove the following lemma:
 
 \begin{lemma}\label{Hclosed_basic}
 $\mathbb{Z}_m^n$ is $H$-closed if and only if the basic Ducci sequence of $\mathbb{Z}_m^n$ is $H$-closed. 
 \end{lemma}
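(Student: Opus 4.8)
The plan is to prove the two implications separately, with essentially all the content in the reverse one. The forward direction is immediate: by definition, $\mathbb{Z}_m^n$ being $H$-closed means every Ducci sequence on $\mathbb{Z}_m^n$ is $H$-closed, and the basic Ducci sequence is one such sequence. For the reverse direction, I would assume the Ducci sequence of $\mathbf{e} := (0,0,\dots,0,1)$ is $H$-closed and show this forces every Ducci sequence on $\mathbb{Z}_m^n$ to be $H$-closed. Write $L = L_m(n)$ and $P = P_m(n)$, so that $\text{Len}(\mathbf{e}) = L$, $\text{Per}(\mathbf{e}) = P$, and the Ducci cycle of $\mathbf{e}$ is the set $\{D^{L}(\mathbf{e}), D^{L+1}(\mathbf{e}), \dots, D^{L+P-1}(\mathbf{e})\}$. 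The first step is to extract an algebraic identity: for a fixed $\beta$ with $0 < |\beta| < n$, $H$-closedness puts $H^{\beta}(D^{L}(\mathbf{e}))$ inside the Ducci cycle of $\mathbf{e}$, so $H^{\beta}(D^{L}(\mathbf{e})) = D^{L+t}(\mathbf{e}) = D^{t}(D^{L}(\mathbf{e}))$ for some $0 \le t < P$; letting $\alpha(\beta)$ be $t$ when $t > 0$ and $P$ when $t = 0$, this reads $H^{\beta}(D^{L}(\mathbf{e})) = D^{\alpha(\beta)}(D^{L}(\mathbf{e}))$ with $\alpha(\beta) > 0$.

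The main step is to propagate this single identity at the generator $\mathbf{e}$ to all of $K(\mathbb{Z}_m^n)$, using that $D$ is an endomorphism commuting with $H$. Since $\{H^{j}(\mathbf{e}) : 0 \le j \le n-1\}$ is the standard basis of $\mathbb{Z}_m^n$, applying $D^{L}$ shows that every tuple in $D^{L}(\mathbb{Z}_m^n)$ is a $\mathbb{Z}_m$-linear combination of the tuples $H^{j}(D^{L}(\mathbf{e}))$, $0 \le j \le n-1$. Moreover $K(\mathbb{Z}_m^n) \subseteq D^{L}(\mathbb{Z}_m^n)$: any $\mathbf{w} \in K(\mathbb{Z}_m^n)$ is cyclic, so $\mathbf{w} = D^{kP}(\mathbf{w})$ for $k$ large enough that $kP \ge L$, hence $\mathbf{w} = D^{L}(D^{kP-L}(\mathbf{w}))$. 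So write $\mathbf{w} = \sum_{j} c_{j} H^{j}(D^{L}(\mathbf{e}))$ with $c_j \in \mathbb{Z}_m$; commuting $H^{\beta}$ past each $H^{j}$ and applying the identity from the first step termwise gives $H^{\beta}(\mathbf{w}) = D^{\alpha(\beta)}(\mathbf{w})$ for every $\mathbf{w} \in K(\mathbb{Z}_m^n)$ and every $\beta$ with $0 < |\beta| < n$.

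To finish, take an arbitrary $\mathbf{u} \in \mathbb{Z}_m^n$ and any $\mathbf{v}$ in the Ducci cycle of $\mathbf{u}$; then $\mathbf{v} \in K(\mathbb{Z}_m^n)$, so $H^{\beta}(\mathbf{v}) = D^{\alpha(\beta)}(\mathbf{v})$ lies in the Ducci cycle of $\mathbf{v}$, which is the Ducci cycle of $\mathbf{u}$. As this holds for every $-n < \beta < n$ (the case $\beta = 0$ being trivial) and every such $\mathbf{v}$, the Ducci sequence of $\mathbf{u}$ is $H$-closed, and since $\mathbf{u}$ was arbitrary, $\mathbb{Z}_m^n$ is $H$-closed. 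I expect the only real obstacle to be the bookkeeping in the middle step — pinning down the generating set $\{H^{j}(D^{L}(\mathbf{e}))\}$ for $K(\mathbb{Z}_m^n)$ and checking that linearity lets a relation at $\mathbf{e}$ spread across the whole subgroup — while the rest is formal once that is set up.
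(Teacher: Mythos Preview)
Your proof is correct and follows essentially the same route as the paper's: both use the basis decomposition $\mathbf{u}=\sum_j c_j H^{j}(\mathbf{e})$ together with the linearity and commutativity of $D$ and $H$ to transport the identity $H^{\beta}D^{L}(\mathbf{e})=D^{\alpha}D^{L}(\mathbf{e})$ from the basic sequence to every tuple. Your version is in fact a bit more careful --- you produce an $\alpha(\beta)$ for each $\beta$ (the paper picks a single pair $(\alpha,\beta)$ and leaves implicit why that suffices for full $H$-closedness), and your convention $\alpha(\beta)=P$ when $t=0$ handles the degenerate case $K(\mathbb{Z}_m^n)=\{\mathbf{0}\}$ uniformly, whereas the paper splits off the ``$n,m$ both powers of $2$'' case separately.
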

 \begin{proof}
 $(\Rightarrow)$ If $\mathbb{Z}_m^n$ is $H$-closed, then all Ducci sequences in $\mathbb{Z}_m^n$ are $H$-closed, including the basic Ducci sequence.
 
 $(\Leftarrow)$ Assume the basic Ducci sequence of $\mathbb{Z}_m^n$ is $H$-closed. If $n,m$ are both powers of $2$, then $\mathbb{Z}_m^n$ is $H$-closed, so we may assume that $n,m$ are not both powers of $2$. Let $L=L_m(n)$. Then there exists $\alpha>0$ and $-n < \beta < n$, $\beta \neq 0$ such that 
 \begin{equation}\label{Eq_Hclosed_basic}
 D^{L+\alpha}(0,0,...,0,1)=H^{\beta}(D^L(0,0,...,0,1)).
 \end{equation}
 It suffices to show 
 \[D^{L+\alpha}(\mathbf{u})=H^{\beta}(D^L(\mathbf{u}))\]
 for every $\mathbf{u} \in \mathbb{Z}_m^n$.
 
\indent We now note that every $\mathbf{u}=(x_1, x_2, ..., x_n) \in \mathbb{Z}_m^n$ can be broken down as
 \[\mathbf{u} =\sum_{s=1}^n x_sH^{-s}(0,0,...,0,1).\]
 This specific breakdown is used on page 6 of \cite{Paper1}, and before that, a more general breakdown for Ducci sequences on Abelian groups is provided in the proof of Lemma 1 of \cite{Breuer1}. Applying Ducci to both sides yields
 \[D^{L+\alpha}(\mathbf{u})=\sum_{s=1}^nx_sH^{-s}(D^{L+\alpha}(0,0,...,0,1)\]
 because $H,D$ commute and because for $\lambda \in \mathbb{Z}_m$ and $\mathbf{v} \in \mathbb{Z}_m^n$, $D(\lambda \mathbf{v})=\lambda D(\mathbf{v})$ like in the original Ducci case discussed earlier. Using Equation (\ref{Eq_Hclosed_basic}), this is
 \[D^{L+\alpha}(\mathbf{u})=\sum_{s=1}^n x_s H^{-s}(H^{\beta}(D^L(0,0,...,0,1))).\]
 Rearranging, this is 
 \[D^{L+\alpha}(\mathbf{u})=H^{\beta}(D^L(\sum_{s=1}^n x_s H^{-s}(0,0,...,0,1))),\]
 which is $H^{\beta}(D^L(\mathbf{u}))$ and the lemma follows.

 \end{proof}
 
 \indent This lemma will allow us to prove $\mathbb{Z}_m^n$ is $H$-closed by proving its basic Ducci sequence is $H$-closed. 
 
 \indent We introduce one more tool that we will use to prove Theorems \ref{HclosedTheorem} and \ref{Hclosed_even}. To do this, we examine the first few entries of the Ducci sequence of $(x_1, x_2, ...., x_n)$.
 
 \[(x_1, x_2, ..., x_n)\]
     \[(x_1+x_2, x_2+x_3, ..., x_n+x_1)\]
     \[(x_1+2x_2+x_3, x_2+2x_3+x_4, ..., x_n+2x_1+x_2\]
     \[(x_1+3x_2+3x_3+x_4, x_2+3x_3+3x_4+x_5, ..., x_n+3x_1+3x_2+x_3)\]
     \[\vdots\]  
     
     Notice that for each tuple in the Ducci sequence, the coefficient on $x_1$ is the same as the one on $x_2$ in the second entry, the same as the one on $x_3$ in the third entry, and so on. We take advantage of this pattern by defining $a_{r,s}$ to be the coefficient on $x_{s-i+1}$ in the $i$th entry of $D^r(x_1, x_2, ..., x_n)$ where $r>0$ and $1 \leq s \leq n$. We reduce the $s$ coordinate in $a_{r,s}$ modulo $n$, where we say that coordinate is $n$ when $s$ is divisible by $n$. A more thorough introduction of these $a_{r,s}$ coefficients is provided on page 6 of \cite{Paper1}. 
     
     \indent There are a few findings about $a_{r,s}$ that will prove useful. The first comes from Theorem 5 of \cite{Paper1}, which allows us to break $a_{r,s}$ down and identify the value of $a_{r,s}$ when $r$ is small. Let $r>t>0$ and $1 \leq s \leq n$. Then 
     \begin{itemize}
     \item $a_{r,s}=a_{r-1,s}+a_{r-1, s-1}$.
     
     \item $a_{r,s}=\displaystyle{\sum_{i=1}^n a_{t,i}a_{r-t,s-i+1}}$.
     
     \item Let $r<n$, then $a_{r,s}=\displaystyle{\binom{r}{s-1}}$.
\end{itemize}      
 
 
 \indent 

 In addition to the sum formula in the last bullet point, there is another sum breakdown. Originally found on page 103 of \cite{Wong} and officially stated in Proposition 2.1 of \cite{Dular}, this breakdown says the $i$th entry of $D^r(x_1, x_, ..., x_n)$ is equivalent to $\displaystyle{\sum_{j=0}^r \binom{r}{j}x_{i+j} \; \text{mod} \; m}$. We can adapt this to be specifically in terms of $a_{r,s}$, yielding the following lemma:
 
 \begin{lemma}\label{2nd_ars_sum}
 Let $r>t>0$, and $1 \leq s \leq n$. Then 
 \[a_{r,s}=\sum_{i=0}^t \binom{t}{i} a_{r-t,s-i}.\]
 \end{lemma}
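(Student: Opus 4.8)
The plan is to push the Wong--Dular expansion through the composition identity for $D$ and let the combinatorics collapse; the second bullet of Theorem~5 of \cite{Paper1} is exactly that composition identity written in terms of the $a$'s. First I would record the closed form the Wong--Dular formula forces on the coefficients: since $a_{r,s}$ is the coefficient of $x_s$ in the first entry of $D^r(x_1,\dots,x_n)$ (the $i=1$ instance of the defining condition), and that first entry is $\sum_{j=0}^{r}\binom{r}{j}x_{1+j}$ with subscripts read modulo $n$, collecting every term landing on $x_s$ gives
\[
a_{r,s}=\sum_{\substack{0\le j\le r\\ j\equiv s-1\ (\mathrm{mod}\ n)}}\binom{r}{j}.
\]
When $r<n$ the only admissible $j$ is $j=s-1$, recovering $a_{r,s}=\binom{r}{s-1}$ from the third bullet of Theorem~5 of \cite{Paper1}; but unlike that bullet, the displayed formula holds for every $r$, which matters here because $t$ is unrestricted.

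Next I would apply this formula with $r$ replaced by $t$ to rewrite each $a_{t,i}$, and substitute into the second bullet $a_{r,s}=\sum_{i=1}^{n}a_{t,i}\,a_{r-t,s-i+1}$ of Theorem~5 of \cite{Paper1}. This produces a double sum over $i\in\{1,\dots,n\}$ and over $k$ with $0\le k\le t$ and $k\equiv i-1\pmod n$. Because $\{1,\dots,n\}$ is a complete residue system modulo $n$ and the second subscript of $a$ is itself reduced modulo $n$, each $k\in\{0,\dots,t\}$ sits in exactly one of the $n$ outer terms, attached there to $a_{r-t,\,s-i+1}=a_{r-t,\,s-k}$; collapsing the double sum to a single sum over $k$ gives $a_{r,s}=\sum_{k=0}^{t}\binom{t}{k}a_{r-t,s-k}$, which is the claim. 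The identity is one of integers, so it also holds modulo $m$.

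The only spot requiring care is the wraparound modulo $n$: one has to fix the right residue class in the closed form for $a_{r,s}$ and then check that the index $s-i+1$ from the second bullet really does reduce to $s-k$ once $i-1\equiv k$; neither point is deep. A self-contained alternative avoiding the closed form altogether is to induct on $t$ using only the first bullet $a_{r,s}=a_{r-1,s}+a_{r-1,s-1}$ of Theorem~5: the case $t=1$ is that bullet verbatim, and the passage from $t$ to $t+1$ follows by expanding each $a_{r-t,s-i}$ via the same bullet, reindexing one of the two resulting sums by $i\mapsto i-1$, and combining coefficients through Pascal's rule $\binom{t}{i}+\binom{t}{i-1}=\binom{t+1}{i}$.
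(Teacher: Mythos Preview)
Your proposal is correct. Your primary route---feeding the Wong--Dular closed form $a_{t,i}=\sum_{k\equiv i-1\ (n)}\binom{t}{k}$ into the composition identity $a_{r,s}=\sum_{i=1}^{n}a_{t,i}\,a_{r-t,s-i+1}$ (the second bullet of Theorem~5 of \cite{Paper1}) and collapsing the double sum---is not what the paper does. The paper instead gives exactly the induction you sketch as your alternative: base case $t=1$ is the first bullet, and the passage from $t-1$ to $t$ expands each $a_{r-t+1,s-i}$ via that bullet, reindexes one of the two resulting sums, and applies Pascal's rule. Your main argument is more conceptual and makes the connection to the binomial expansion $D^{t}=(I+H)^{t}$ transparent, at the price of importing two outside ingredients; the paper's induction is more elementary, relying only on the single recursion $a_{r,s}=a_{r-1,s}+a_{r-1,s-1}$. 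Since you already have that alternative written out, you are covered either way.
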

 \begin{proof}
 We prove this via induction, with the $t=1$ case serving as the basis case.
 
 \indent \textbf{Inductive Step:} Assume it is true for $t-1$, or in other words, assume
 \[a_{r,s}=\sum_{i=0}^{t-1} \binom{t-1}{i} a_{r-t+1,s-i}.\]
 Then, by Theorem 5 of \cite{Paper1}, this is
 \[\sum_{i=0}^{t-1} \binom{t-1}{i}a_{r-t,s-i}+\sum_{i=0}^{t-1}\binom{t-1}{i} a_{r-t, s-i-1}.\]
 Changing the indices on the second sum, this is 
 \[\sum_{i=0}^{t-1} \binom{t-1}{i}a_{r-t,s-i}+\sum_{i=1}^t \binom{t-1}{i-1} a_{r-t, s-i}.\]
 Next, we pull out the $i=0$ term from the first sum and the $i=t$ term from the second sum to obtain
 \[\binom{t-1}{0}a_{r-t,s}+\binom{t-1}{t-1}a_{r-t,s-t}+\sum_{i=1}^{t-1}(\binom{t-1}{i}+\binom{t-1}{i-1})a_{r-t,s-i}.\]
 This is the same as 
 \[\binom{t}{0}a_{r-t,s}+\binom{t}{t}a_{r-t,s-t}+\sum_{i=1}^{t-1} \binom{t}{i} a_{r-t,s-i}\]
 or
 \[\sum_{i=1}^{t} \binom{t}{i} a_{r-t,s-i}\]
 and the lemma follows.
 \end{proof}
 
 \indent We will also need Lemma 8 from \cite{Paper3}, which says that for $r \geq 0$, $\displaystyle{\sum_{i=1}^s a_{r,s}=2^r}$.

\section{When $\mathbb{Z}_m^3$ is $H$-closed}
 \indent The last piece of tools with the $a_{r,s}$ coefficients we will be using is for the specific case where $n=3$. Here, since there are only three unique $a_{r,s}$ coefficients for each $r$, we will be denoting $a_r=a_{r,1}$, $b_r=a_{r,2}$, and $c_r=a_{r,3}$. Adapting some of our lemmas about the general $a_{r,s}$ coefficients, we have Corollary 7 from \cite{Paper5}, which says if $r>t>0$, then
 \begin{itemize}
 \item $a_{r+t}=a_ta_{r}+b_tc_{r}+c_tb_{r}$.
 
\item $b_{r+t}=a_tb_{r}+b_ta_{r}+c_tc_{r}$.

\item $c_{r+t}=a_tc_{r}+b_tb_{r}+c_ta_{r}$.
 \end{itemize}
 
 As for Lemma 8 from \cite{Paper3}, we have 
 \begin{corollary}\label{ars_sumto_n=3}
 Let $r\geq 0$. Then 
 \[a_r+b_r+c_r=2^r.\]
 \end{corollary}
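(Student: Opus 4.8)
The plan is to obtain this as the $n=3$ specialization of Lemma 8 of \cite{Paper3}. That lemma gives $\sum_{i=1}^{n} a_{r,i} = 2^r$ for all $r \geq 0$, and in the $n=3$ notation introduced just above the statement, the three coefficients $a_{r,1}, a_{r,2}, a_{r,3}$ are exactly $a_r, b_r, c_r$. So the sum collapses to $a_r + b_r + c_r = 2^r$, with no restriction beyond $r \geq 0$ since Lemma 8 is already stated for $r \geq 0$. In the write-up I would phrase this as a one-line deduction from \cite{Paper3}.

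For a self-contained argument I would instead induct on $r$. The base case is $r = 0$: since $D^0(x_1,x_2,x_3) = (x_1,x_2,x_3)$, the coefficient on $x_1$ in the first entry is $1$ and the others vanish, so $a_0 = 1$, $b_0 = c_0 = 0$, giving $a_0 + b_0 + c_0 = 1 = 2^0$. For the inductive step I would use the recurrence $a_{r,s} = a_{r-1,s} + a_{r-1,s-1}$ from Theorem 5 of \cite{Paper1}, reading the second index modulo $3$. In the $a_r, b_r, c_r$ notation this becomes $a_r = a_{r-1} + c_{r-1}$, $b_r = b_{r-1} + a_{r-1}$, and $c_r = c_{r-1} + b_{r-1}$ (here $a_{r-1,0}$ is $a_{r-1,3} = c_{r-1}$). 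Adding the three equations, each of $a_{r-1}, b_{r-1}, c_{r-1}$ occurs exactly twice on the right, so $a_r + b_r + c_r = 2(a_{r-1} + b_{r-1} + c_{r-1}) = 2 \cdot 2^{r-1} = 2^r$ by the inductive hypothesis.

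There is essentially no obstacle here; the only point requiring care is the index bookkeeping modulo $3$ (so that $a_{r-1,0}$ is interpreted as $c_{r-1}$) and checking the $r = 0$ values against the definition of the $a_{r,s}$, which were introduced for $r > 0$ but extend to $r = 0$ via $D^0 = I$. I would present the direct deduction from \cite{Paper3} as the main proof and possibly remark on the short inductive alternative.
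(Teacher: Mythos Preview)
Your proposal is correct and matches the paper's approach exactly: the corollary is stated in the paper immediately after recalling Lemma~8 of \cite{Paper3} with the phrase ``As for Lemma 8 from \cite{Paper3}, we have,'' so the intended proof is precisely the one-line specialization to $n=3$ that you give. Your inductive alternative is also fine but not needed.
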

 
 The last thing we need is Lemma 8 from \cite{Paper5}, which says for $n=3$,
 \begin{itemize}
    \item If $r \equiv 0 \; \text{mod} \; 6$, $a_r=b_r+1=c_r+1$.
    
    \item If $r \equiv 1 \; \text{mod} \; 6$, then $c_r=a_r-1=b_r-1$.
    
    \item If $r \equiv 2 \; \text{mod} \; 6$, then $b_r=a_r+1=c_r+1$.
    
    \item If $r \equiv 3 \; \text{mod} \; 6$, then $a_r=b_r-1=c_r-1$.
    
    \item If $r \equiv 4 \; \text{mod} \; 6$, then $c_r=a_r+1=b_r+1$.
    
    \item If $r \equiv 5 \; \text{mod} \; 6$, then $b_r=a_r-1=c_r-1$.
    
\end{itemize}
 
 Now we can prove Theorem \ref{HclosedTheorem}:
 \begin{proof}[Proof of Theorem \ref{HclosedTheorem}:]
 \textbf{(1):} Assume $m=2^l$. Our goal is to show $D^2(\mathbf{u})=H(\mathbf{u})$ for all $\mathbf{u} \in K(\mathbb{Z}_m^3)$. We begin by noting that in the proof of Proposition 5.2 of \cite{Dular}, they show that the basic Ducci sequence satisfies the condition that $D^2(\mathbf{u})=H(\mathbf{u})$ for $\mathbf{u}$ in the cycle of the basic Ducci cycle and $n=3$. We provide an alternate proof of this using the notation of $H$.
 
 \indent By Theorem 2 of \cite{Paper3}, $L_m(3)=l$. It suffices to show 
 \[D^{l+2}(0,0,...,0,1)=H(D^l(0,0,...,0,1)).\]
  We can prove this by showing 
    \[a_{l+2} \equiv c_l \; \text{mod} \; 2^l\]
    \[b_{l+2} \equiv a_l \; \text{mod} \; 2^l\]
    \[c_{l+2} \equiv b_l \; \text{mod} \; 2^l.\]
    Notice by Corollary 7 of \cite{Paper5}, we have 
    \[a_{l+2}=a_la_2+b_lc_2+c_lb_2.\]
    This is $a_l+b_l+2c_l$ or    
    \[(a_l+b_l+c_l)+c_l.\]
    By Corollary \ref{ars_sumto_n=3}, this is
    \[2^l+c_l,\]
    which is equivalent to $c_l \; \text{mod} \; m$.
   The other equivalencies $b_{l+2} \equiv a_l \; \text{mod} \; m$ and $c_{l+2} \equiv b_l \; \text{mod} \; m$ can be proved similarly and
    Part (1) follows from here.
    
    \textbf{(2):} Let $m$ be prime such that $m \equiv 5 \; \text{mod} \; 6$. We want to show $D^{m-1}(\mathbf{u})=H(\mathbf{u})$ for every $\mathbf{u} \in \mathbb{Z}_m^3$. Notice $m-1 \equiv 4 \; \text{mod} \; 6$ gives us $a_{m-1}=b_{m-1}=c_{m-1}-1$ by Lemma 8 of \cite{Paper5}. By Corollary \ref{ars_sumto_n=3},
    \[a_{m-1}+b_{m-1}+c_{m-1}=2^{m-1}.\]
    By Fermat's Little Theorem, a proof of which can be found in Theorem 5.1 of \cite{Burton}, $2^{m-1} \equiv 1 \; \text{mod} \; m$. Therefore, this yields
    \[3a_{m-1}+1 \equiv 1 \; \text{mod} \; m,\]
    which forces
    \[a_{m-1} \equiv 0 \; \text{mod} \; m.\]
    
    This also give us $b_{m-1} \equiv 0 \; \text{mod} \; m$ and $c_{m-1} \equiv 1 \; \text{mod} \; m$. Therefore, 
    \[D^{m-1}(x_1, x_2, x_3)=(x_3, x_1, x_2)\]
     and
    \[D^{m-1}(x_1, x_2, x_3)=H^2(x_1, x_2, x_3).\]

    \textbf{(3):} Let $m=2^lp$ where $p \equiv 5 \; \text{mod} \; m$ is prime. We want to show $D^{p-1}(\mathbf{u})=H(\mathbf{u})$ for every $\mathbf{u} \in K(\mathbb{Z}_m^3)$. By Theorem 2 of \cite{Paper3}, $L_{m}=l$. It suffices to show $D^{l+p-1}(0,0,1)=H^2(D^l(0,0,1))$ for every $\mathbf{u} \in \mathbb{Z}_m^3$. Therefore, we only need to show
    \[a_{l+p-1} \equiv b_l \; \text{mod} \; m\]
     \[b_{l+p-1} \equiv c_l \; \text{mod} \; m\]
      \[c_{l+p-1} \equiv a_l \; \text{mod} \; m.\]
      Notice
      \[a_{l+p-1}=a_la_{p-1}+b_lc_{p-1}+c_lb_{p-1}.\]
      By Lemma 8 of \cite{Paper5}, this is
      \[a_lb_{p-1}+b_l(b_{p-1}+1)+c_lb_{p-1}.\]
      Rearranging, this is
      \[(a_l+b_l+c_l)b_{p-1}+b_l,\]
      or by Corollary \ref{ars_sumto_n=3},
      \[2^lb_{p-1}+b_l.\]
      As we saw in the proof of (2), $b_{p-1} \equiv 0 \; \text{mod} \; p$. It then follows that 
      \[a_{l+p-1} \equiv b_l \; \text{mod} \; m.\] We can similarly prove that $b_{l+p-1} \equiv c_l \; \text{mod} \; m$ and $c_{l+p-1} \equiv a_l \; \text{mod} \; m$.
      Part (3) follows from here.
 \end{proof}
 
 \section{Other Cases where $\mathbb{Z}_m^n$ is $H$-closed}

\indent We now expand back out to other values of $n$ to examine which values of $n,m$ cause $\mathbb{Z}_m^n$ to be $H$-closed or weakly $H$-closed. Besides the $n=3$ discussed in the previous section, we began by checking the cases where $4 \leq n \leq 12$ and $3 \leq m \leq 12$. We then examined some additional cases for these values of $n$ to further test hypotheses that arose during the investigation, including for $n=16,32$ to further examine the case when $n$ is a power of $2$. Recall that when $n,m$ are both powers of $2$, $\mathbb{Z}_m^n$ is $H$-closed, but there is not a $\alpha>0$ and $-n < \beta < n$, $\beta \neq 0$ such that $D^{\alpha}(\mathbf{v})=H^{\beta}(\mathbf{v})$ for $\mathbf{v} \in K(\mathbb{Z}_m^n)$ that we are interested in, so they are not included in the following tables.
 
\indent  Figure \ref{Hclosed_cases} includes $n,m$ where $\mathbb{Z}_m^n$ is $H$-closed and the smallest value of $\alpha$ such that if $\mathbf{v} \in K(\mathbb{Z}_m^n)$, then $D^{\alpha}(\mathbf{v})=H^{\beta}(\mathbf{v})$, for some $-n < \beta < n$ and $\beta \neq 0$.

\begin{figure}
\centering

\begin{tabular}{|c|c|c|}
\hline
$n$ & $m$ & Let $\mathbf{v} \in K(\mathbb{Z}_m^n)$\\
\hline
4 & 3 & $D^2(\mathbf{v})=H^{-1}(\mathbf{v})$\\
  & 6 & $D^2(\mathbf{v})=H^{-1}(\mathbf{v})$\\
  & 7 & $D^6(\mathbf{v})=H^{-1}(\mathbf{v})$\\
  & 9 & $D^6(\mathbf{v})=H(\mathbf{v})$\\
  & 11 & $D^{10}(\mathbf{v})=H^{-1}(\mathbf{v})$\\
  & 12 & $D^2(\mathbf{v})=H^{-1}(\mathbf{v})$\\
  & 14 & $D^6(\mathbf{v})=H^{-1}(\mathbf{v})$\\
  & 19 & $D^{18}(\mathbf{v})=H^{-1}(\mathbf{v})$ \\
  & 21 & $D^{12}(\mathbf{v})=H^{-1}(\mathbf{v})$\\
  & 33 & $D^{10}(\mathbf{v})=H^{-1}(\mathbf{v})$\\
  \hline
5 & 3 & $D^8(\mathbf{v})=H^{-1}(\mathbf{v})$\\
  & 4 & $D^6(\mathbf{v})=H^{-2}(\mathbf{v})$\\
  & 6 & $D^{24}(\mathbf{v})=H^2(\mathbf{v})$\\
  & 7 & $D^{48}(\mathbf{v})=H^{-1}(\mathbf{v})$\\
  & 8 & $D^{12}(\mathbf{v})=H(\mathbf{v})$\\
  & 9 & $D^{24}(\mathbf{v})=H^2(\mathbf{v})$\\
  & 12 & $D^{24}(\mathbf{v})=H^2(\mathbf{v})$\\ 
  & 13 & $D^{84}(\mathbf{v})=H^{-2}(\mathbf{v})$\\
  & 17 & $D^{72}(\mathbf{v})=H(\mathbf{v})$\\
  & 19 & $D^{18}(\mathbf{v})=H^{-1}(\mathbf{v})$\\
  & 21 & $D^{48}(\mathbf{v})=H^{-1}(\mathbf{v})$\\
  & 23 & $D^{528}(\mathbf{v})=H^{-1}(\mathbf{v})$\\
  & 29 & $D^{28}(\mathbf{v})=H^{-1}(\mathbf{v})$\\
  \hline
6 & 5 & $D^4(\mathbf{v})=H^{-1}(\mathbf{v})$\\
  & 11 & $D^{10}(\mathbf{v})=H^{-1}(\mathbf{v})$\\
  & 17 & $D^{16}(\mathbf{v})=H^{-1}(\mathbf{v})$\\
  & 25 & $D^{20}(\mathbf{v})=H(\mathbf{v})$\\
  & 121 & $D^{110}(\mathbf{v})=H(\mathbf{v})$\\
  & 125 & $D^{100}(\mathbf{v})=H^{-1}(\mathbf{v})$\\
  \hline
7 & 3 & $D^{26}(\mathbf{v})=H^{-1}(\mathbf{v})$\\
  & 5 & $D^{124}(\mathbf{v})=H^{-1}(\mathbf{v})$\\
  & 9 & $D^{78}(\mathbf{v})=H^{-3}(\mathbf{v})$\\
  & 13 & $D^{12}(\mathbf{v})=H^{-1}(\mathbf{v})$\\
  & 15 & $D^{1612}(\mathbf{v})=H(\mathbf{v})$\\
  & 19 & $D^{2286}(\mathbf{v})=H^2(\mathbf{v})$\\
  & 27 & $D^{234}(\mathbf{v})=H^{-2}(\mathbf{v})$\\
  & 41 & $D^{40}(\mathbf{v})=H^{-1}(\mathbf{v})$\\
  \hline
8 & 7 & $D^6(\mathbf{v})=H^{-1}(\mathbf{v})$\\
  & 23 & $D^{22}(\mathbf{v})=H^{-1}(\mathbf{v})$\\
  & 31 & $D^{30}(\mathbf{v})=H^{-1}(\mathbf{v})$\\
  & 47 & $D^{46}(\mathbf{v})=H^{-1}(\mathbf{v})$\\  
  & 49 & $D^{42}(\mathbf{v})=H(\mathbf{v})$\\
  & 98 & $D^{42}(\mathbf{v})=H(\mathbf{v})$\\
  & 161 & $D^{66}(\mathbf{v})=H^{-3}(\mathbf{v})$\\
  & 322 & $D^{66}(\mathbf{v})=H^{-3}(\mathbf{v})$\\
  & 2254 & $D^{462}(\mathbf{v})=H^3(\mathbf{v})$\\
  & 3937 & $D^{630}(\mathbf{v})=H^3(\mathbf{v})$\\
  \hline
  \end{tabular}
  \quad
  \begin{tabular}{|c|c|c|}
\hline
$n$ & $m$ & Let $\mathbf{v} \in K(\mathbb{Z}_m^n)$\\
\hline
9 & 2 & $D^7(\mathbf{v})=H^{-1}(\mathbf{v})$\\
  & 4 & $D^{14}(\mathbf{v})=H^{-2}(\mathbf{v})$\\
  & 5 & $D^{124}(\mathbf{v})=H^{-1}(\mathbf{v})$\\
  & 8 & $D^{28}(\mathbf{v})=H^{-4}(\mathbf{v})$\\
  & 10 & $D^{868}(\mathbf{v})=H^2(\mathbf{v})$\\
  & 11 & $D^{1330}(\mathbf{v})=H^{-1}(\mathbf{v})$\\
  & 17 & $D^{16}(\mathbf{v})=H^{-1}(\mathbf{v})$\\
  & 53 & $D^{53}(\mathbf{v})=H^{-1}(\mathbf{v})$\\
  \hline
10 & 3 & $D^8(\mathbf{v})=H^{-1}(\mathbf{v})$\\
   & 7 & $D^{48}(\mathbf{v})=H^{-1}(\mathbf{v})$\\
   & 9 & $D^{24}(\mathbf{v})=H^{-3}(\mathbf{v})$\\  
   & 13 & $D^{168}(\mathbf{v})=H^{-1}(\mathbf{v})$\\
   & 17 & $D^{288}(\mathbf{v})=H^{-1}(\mathbf{v})$\\
   & 19 & $D^{18}(\mathbf{v})=H^{-1}(\mathbf{v})$\\
   & 29 & $D^{28}(\mathbf{v})=H^{-1}(\mathbf{v})$\\
   & 361 & $D^{342}(\mathbf{v})=H(\mathbf{v})$\\
   & 841 & $D^{812}(\mathbf{v})=H(\mathbf{v})$\\
 \hline
11 & 2 & $D^{31}(\mathbf{v})=H^{-1}(\mathbf{v})$\\
   & 4 & $D^{62}(\mathbf{v})=H^{-2}(\mathbf{v})$\\
   & 7 & $D^{16806}(\mathbf{v})=H^{-1}(\mathbf{v})$\\
   & 8 & $D^{124}(\mathbf{v})=H^{-4}(\mathbf{v})$\\
   & 13 & $D^{371292}(\mathbf{v})=H^{-1}(\mathbf{v})$\\
   & 16 & $D^{248}(\mathbf{v})=H^3(\mathbf{v})$\\
   & 17 & $D^{709928}(\mathbf{v})=H^5(\mathbf{v})$\\
   & 43 & $D^{42}(\mathbf{v})=H^{-1}(\mathbf{V})$\\
   & 109 & $D^{108}(\mathbf{v})=H^{-1}(\mathbf{v})$\\
   \hline
12 & 11 & $D^{10}(\mathbf{v})=H^{-1}(\mathbf{v})$\\
   & 23 & $D^{22}(\mathbf{v})=H^{-1}(\mathbf{v})$\\
   & 121 & $D^{110}(\mathbf{v})=H(\mathbf{v})$\\
   & 529 & $D^{506}(\mathbf{v})=H(\mathbf{v})$\\
\hline
16 & 31 & $D^{30}(\mathbf{v})=H^{-1}(\mathbf{v})$\\
   & 47 & $D^{46}(\mathbf{v})=H^{-1}(\mathbf{v})$\\
   & 62 & $D^{30}(\mathbf{v})=H^{-1}(\mathbf{v})$\\
   & 79 & $D^{78}(\mathbf{v})=H^{-1}(\mathbf{v})$\\
   & 124 & $D^{30}(\mathbf{v})=H^{-1}(\mathbf{v})$\\  
   & 961 & $D^{930}(\mathbf{v})=H(\mathbf{v})$\\
   & 1922 & $D^{930}(\mathbf{v})=H(\mathbf{v})$\\
   & 3937 & $D^{630}(\mathbf{v})=H^{-5}(\mathbf{v})$\\
   & 751967 & $D^{11970}(\mathbf{v})=H(\mathbf{v})$\\
  \hline
32 & 31 & $D^{30}(\mathbf{v})=H^{-1}(\mathbf{v})$\\
   
   & 62 & $D^{30}(\mathbf{v})=H^{-1}(\mathbf{v})$\\
   & 124 & $D^{30}(\mathbf{v})=H^{-1}(\mathbf{v})$\\
   & 127 & $D^{126}(\mathbf{v})=H^{-1}(\mathbf{v})$\\
   & 191 & $D^{190}(\mathbf{v})=H^{-1}(\mathbf{v})$\\
   & 961 & $D^{930}(\mathbf{v})=H(\mathbf{v})$\\
   & 1922 & $D^{930}(\mathbf{v})=H(\mathbf{v})$\\
   & & \\
   \hline

\end{tabular}
\caption{Cases of $n,m$ where $\mathbb{Z}_m^n$ is $H$-closed}
\label{Hclosed_cases}
\end{figure}

Figure \ref{weaklyHclosed_cases} provides the cases where $\mathbb{Z}_m^n$ is weakly $H$-closed, but not $H$-closed, along with the smallest $\alpha>0$ for some $-n < \beta <n$, $\beta \neq -1, 0, 1$ such that $D^{\alpha}(\mathbf{v})=H^{\beta}(\mathbf{v})$ when $\mathbf{v} \in K(\mathbb{Z}_m^n)$.

\begin{figure}
\centering

\begin{tabular}{|c|c|c|}
\hline
$n$ & $m$ & Let $\mathbf{v} \in K(\mathbb{Z}_m^n)$\\
\hline
4 & 21 & $D^{12}(\mathbf{v})=H^2(\mathbf{v})$\\
  & 77 & $D^{60}(\mathbf{v})=H^2(\mathbf{v})$\\\
  & 539 & $D^{420}(\mathbf{v})=H^2(\mathbf{v})$\\
  & 847 & $D^{660}(\mathbf{v})=H^2(\mathbf{v})$\\
\hline
6 & 4 & $D^4(\mathbf{v})=H^2(\mathbf{v})$\\
  & 8 & $D^8(\mathbf{v})=H^{-2}(\mathbf{v})$\\
  & 10 & $D^8(\mathbf{v})=H^{-2}(\mathbf{v})$\\
  & 22 & $D^{20}(\mathbf{v})=H^{-2}(\mathbf{v})$\\
  & 34 & $D^{32}(\mathbf{v})=H^{-2}(\mathbf{v})$\\
  & 55 & $D^{40}(\mathbf{v})=H^2(\mathbf{v})$\\
\hline
8 & 4991 & $D^{660}(\mathbf{v})=H^2(\mathbf{v})$\\
  & 9982 & $D^{660}(\mathbf{v})=H^2(\mathbf{v})$\\
  & 234577 & $D^{15180}(\mathbf{v})=H^{-2}(\mathbf{v})$\\
  \hline
9 & 7 & $D^{114}(\mathbf{v})=H^3(\mathbf{v})$\\
  & 13 & $D^{732}(\mathbf{v})=H^{-3}(\mathbf{v})$\\
  & 14 & $D^{798}(\mathbf{v})=H^3(\mathbf{v})$\\
  & 35 & $D^{7068}(\mathbf{v})=H^{-3}(\mathbf{v})$\\
  \hline
 
10 & 2 & $D^6(\mathbf{v})=H^{-2}(\mathbf{v})$\\
   & 4 & $D^{12}(\mathbf{v})=H^{-4}(\mathbf{v})$\\
   & 6 & $D^{48}(\mathbf{v})=H^4(\mathbf{v})$\\
   & 8 & $D^{24}(\mathbf{v})=H^2(\mathbf{v})$\\
   & 12 & $D^{48}(\mathbf{v})=H^4(\mathbf{v})$\\
   & 21 & $D^{96}(\mathbf{v})=H^{-2}(\mathbf{v})$\\
   & 26 & $D^{336}(\mathbf{v})=H^{-2}(\mathbf{v})$\\
   
\hline
12 & 3 & $D^6(\mathbf{v})=H^{-3}(\mathbf{v})$\\
   & 4 & $D^8(\mathbf{v})=H^4(\mathbf{v})$\\
   & 8 & $D^{16}(\mathbf{v})=H^{-4}(\mathbf{v})$\\
   & 9 & $D^{18}(\mathbf{v})=H^3(\mathbf{v})$\\
   & 16 & $D^{32}(\mathbf{v})=H^4(\mathbf{v})$\\
   & 27 & $D^{54}(\mathbf{v})=H^{-3}(\mathbf{v})$\\
   & 33 & $D^{30}(\mathbf{v})=H^{-3}(\mathbf{v})$\\
 \hline
16 & 1457 & $D^{1380}(\mathbf{v})=H^2(\mathbf{v})$\\
   & 2914 & $D^{1380}(\mathbf{v})=H^2(\mathbf{v})$\\   
   & 45167 & $D^{42780}(\mathbf{v})=H^{-2}(\mathbf{v})$\\
  \hline
32 & 3937 & $D^{1260}(\mathbf{v})=H^{-10}(\mathbf{v})$\\
   & 751967 & $D^{23940}(\mathbf{v})=H^2(\mathbf{v})$\\
   \hline

\end{tabular}
\caption{Cases of $n,m$ where $\mathbb{Z}_m^n$ is Weakly $H$-closed}
\label{weaklyHclosed_cases}
\end{figure}

Figure \ref{notHclosed_cases} provides the remaining cases we tested where we could conclude that $\mathbb{Z}_m^n$ was not $H$-closed or weakly $H$-closed.

\begin{figure}
\centering
\begin{tabular}{|c|c|}
\hline
n & m\\
\hline
4 & 5\\
  & 10\\
  & 13\\
  \hline
5 & 5\\
  & 10\\
  & 11\\
  \hline
6 & 3\\
  & 6\\
  & 7\\
  & 9\\
  & 12\\
  \hline
  
7 & 2\\
  & 4\\
  & 6\\
  & 7\\
  & 8\\
  & 10\\
  & 11\\
  & 12\\
  & \\
   &  \\
   & \\
   &  \\
   & \\
   &  \\
   & \\
  \hline
  \end{tabular}
 \quad
 \quad
 \quad
 \quad
  \begin{tabular}{|c|c|}
\hline
n & m\\
\hline
8 & 3\\
  & 5\\
  & 6\\
  & 9\\
  & 10\\
  & 11\\
  & 12\\
  & 21\\
  & 63\\
  \hline
 
9 & 3\\
  & 6\\
  & 9\\
  & 12\\
  & 15\\
  \hline
10 & 5\\
   & 10\\
   & 11\\
   \hline
11 & 3\\
   & 5\\
   & 6\\
   & 9\\
   & 10\\
   & 11\\
   & 12\\
   & \\
   &  \\
   \hline
   
    \end{tabular}
  \quad
 \quad
 \quad
 \quad
  \begin{tabular}{|c|c|}
\hline
n & m\\
\hline
12 & 2\\
   & 5\\
   & 6\\
   & 7\\
   & 10\\
   & 12\\
 \hline
 
16 & 3\\
   & 5\\
   & 6\\
   & 7\\
   & 9\\
   & 10\\
   & 11\\
   & 12\\
   & 93\\
   & 279\\
   & 85399\\ 
   \hline
32 & 3\\
   & 5\\
   & 6\\
   & 7\\
   & 9\\
   & 10\\
   & 12\\  
   & 93\\
   & 279\\
   \hline

\end{tabular}
\caption{Cases of $n,m$ where $\mathbb{Z}_m^n$ is not Weakly $H$-closed}
\label{notHclosed_cases}
\end{figure}

\indent To determine if $\mathbb{Z}_m^n$ was $H$-closed, weakly $H$-closed, or neither, we began by using MATLAB to calculate $L_m(n)$ and $P_m(n)$. Recall that we can focus on whether the basic Ducci sequence is $H$-closed to determine if $\mathbb{Z}_m^n$ is $H$-closed. If $L=L_m(n)$ and there existed $\alpha>0$ and $-n <  \beta < n$, $\beta \neq 0$ such that 
\[D^{L+\alpha}(0,0,...,,0,1)=H^{L+\beta}(0,0,...,0,1),\]
 then there would need to be $\gamma>0$ such that $\alpha*\gamma=P_m(n)$. If $\mathbb{Z}_m^n$ was $H$-closed, then $\gamma=n$. If $\mathbb{Z}_m^n$ was weakly $H$-closed, but not $H$-closed, then $\gamma<n$ and $gcf(\alpha, n) \neq 1$. We used this knowledge to determine if $\mathbb{Z}_m^n$ was $H$-closed, weakly $H$-closed, or neither. 

\indent From Figures \ref{Hclosed_cases}, \ref{weaklyHclosed_cases}, and \ref{notHclosed_cases}, we can begin to hypothesize what values of $n,m$ make $\mathbb{Z}_m^n$ $H$-closed or weakly $H$-closed, as well as what $\alpha>0$ and $-n < \beta <n$, $\beta \neq 0$ satisfy $D^{\alpha}(\mathbf{v})=H^{\beta}(\mathbf{v})$ when $\mathbf{v} \in K(\mathbb{Z}_m^n)$. The first is that if $n$ is even, $m \equiv -1 \; \text{mod} \; n$ is prime, and $\mathbf{v} \in K(\mathbb{Z}_m^n)$, then $D^{m-1}(\mathbf{v})=H^{-1}(\mathbf{v})$, which is Theorem \ref{Hclosed_even} and which we will prove later in this section. 

\indent We believe we can extend this further: if $n$ is even, $m=p^l$ and $p \equiv -1 \; \text{mod} \; n$ where $p$ is prime and $l \geq 1$, then 
\[D^{\phi(m)}(\mathbf{v})=H^{(-1)^l}(\mathbf{v})\]
when $\mathbf{v} \in K(\mathbb{Z}_m^n)$. If true, this would result in $\mathbb{Z}_m^n$ being $H$-closed for this case. 

\indent For the specific case where $n$ is a power of $2$, it also appears that if $m=2^{l_2}p^l$ where $p \equiv -1 \; \text{mod} \; n$ is prime and $l_2 \geq 0$, then 
\[D^{p-1}(\mathbf{v})=H^{(-1)^{l_2}}(\mathbf{v})\]
when $\mathbf{v} \in K(\mathbb{Z}_m^n)$, providing another case when $\mathbb{Z}_m^n$ is $H$-closed, if true. 

\indent In the cases discussed in the previous paragraphs, $\alpha$ is chosen to be the smallest value such that $D^{\alpha}(\mathbf{v})=H^{\beta}(\mathbf{v})$ for $\mathbf{v} \in K(\mathbb{Z}_m^n)$ and $-n < \beta < n$, $\beta \neq 0$. However, it also appears that when $n$ is a power of $2$ and $m=p_1^{l_1}p_2^{l_2}$, $p_1, p_2 \equiv -1 \; \text{mod} \; n$ are prime and $l_1, l_2 \geq 1$ results in $\mathbb{Z}_m^n$ being $H$-closed or weakly $H$-closed. As for the smallest $\alpha$ such that $D^{\alpha}(\mathbf{v})=H^{\beta}(\mathbf{v})$ when $\mathbf{v} \in K(\mathbb{Z}_m^n)$ and $-n < \beta < n$, $\beta \neq 0$, a standard formula for how $\alpha$ is related to $m$ is not as clear. The exception here is when $n=4$ and $p_1, p_2 \neq 3$. Here, it appears that $D^{\phi(m)}(\mathbf{v})=H^2(\mathbf{v})$ when $\mathbf{v} \in K(\mathbb{Z}_m^n)$. If this is true, then $\mathbb{Z}_m^4$ would be weakly $H$-closed.

\indent These are all of the cases that we have been able to identify up to this point and that we would currently like to try proving at some point. For now, we will focus on proving the one in Theorem \ref{Hclosed_even}:
 
 \begin{proof}[Proof of Theorem \ref{Hclosed_even}]
 \indent Assume $n$ is even and $m \equiv -1 \; \text{mod} \; n$ where $m$ is prime. By Theorem 2 of \cite{Paper4}, $L_m(n)=1$ because $n$ is even and $n$ and $m$ are relatively prime. Therefore, it suffices to show 
 \[D^m(0,0,...,0,1)=H^{-1}(D(0,0,...,1)),\]
 or that 
 \[D^m(0,0,...,0,1)=(1,0,0,....,0,1).\]
 This will be the case if 
 \[a_{m,s} \equiv 
 \begin{cases}
 0 \; \text{mod} \; m & 1<s<n\\
  1 \; \text{mod} \; m & s=1,n
 \end{cases}.\]
 
 \indent Write $m=\delta n -1$. Using Lemma \ref{2nd_ars_sum}, we can break down $a_{\delta n-1,s}$ as follows:
 
 \[a_{\delta n -1, s} =\sum_{i=0}^{\delta n -1} \binom{\delta n -1}{i} a_{0,s-i}.\]
 For the term of this sum to be nonzero, we need that both $\displaystyle{\binom{\delta n -1}{i}} \not \equiv 0 \; \text{mod} \; m$ and $a_{0,s-i} \neq 0$. The only time $a_{0, s-i} \neq 0$ is when $s-i \equiv 1 \; \text{mod} \; n$, in which case $a_{0,1}=1$. The two cases where $\displaystyle{\binom{\delta n -1}{i} } \not \equiv 0 \; \text{mod} \; m$ are when $i=0, \delta n-1$, in which case $\displaystyle{\binom{\delta n -1}{i}=1}$. 
 
 \indent If $i=0$, then $a_{0, s-i}=0$ unless $s=1$. Which means $a_{\delta n -1, 1}=1$. If $i=\delta n-1$, then $a_{0, s-i}=0$ unless $s=n$. Which means $a_{\delta n -1, n}=1$. For $s \neq 0, n$, all of the terms of the sum are $0$, so $a_{\delta n-1, s}=0$. From here, the theorem follows.
 \end{proof}

\end{document}